\documentclass[a4paper,11pt]{amsart}

\allowdisplaybreaks[0]
\usepackage{cite}
\usepackage{amsmath}
\usepackage{amssymb}
\usepackage{amsthm}

\numberwithin{equation}{section}
\theoremstyle{plain}
\newtheorem{theorem}{Theorem}[section]
\newtheorem{lemma}[theorem]{Lemma}

\newcommand{\Mtilde}{\widetilde M}
\newcommand{\C}{\mathbb C}
\newcommand{\diver}{\mathop{\rm div}\nolimits}

\title[Hamiltonian stationary Lagrangian surfaces]
{Hamiltonian stationary Lagrangian surfaces of constant Gaussian curvature in complex space forms}

\author{Toru Sasahara}
\address{Division of Mathematics, Center for Liberal Arts and Sciences,
Hachinohe Institute of Technology, Hachinohe, Aomori 031-8501, Japan}
\email{sasahara@hi-tech.ac.jp}
\date{}

\begin{document}

\begin{abstract}
Let $M$ be a connected Hamiltonian stationary Lagrangian surface
of constant Gaussian curvature $K$ in a two-dimensional complex
space form $\Mtilde^2(4\epsilon)$ of constant holomorphic sectional
curvature $4\epsilon$, and let $H$ denote its mean curvature vector field.
We prove that either $K=\epsilon$ or $K=0\ne\epsilon$ and
$|H|$ is constant.  In particular, if $|H|$ is non-constant,
then $K=\epsilon$.
\end{abstract}

\keywords{Hamiltonian stationary Lagrangian surfaces, complex space forms,
constant Gaussian curvature}
\subjclass[2020]{Primary 53C42, Secondary 53B25, 53D12}
\def\subjclassname{\textup{2020} Mathematics Subject Classification}

\maketitle

\section{Introduction}

Let $\Mtilde^n(4\epsilon)$ be a complete, simply connected complex
space form with complex structure $J$, K\"ahler metric
$\langle\cdot,\cdot\rangle$, and holomorphic sectional curvature
$4\epsilon$. An $n$-dimensional submanifold $M$ of
$\Mtilde^n(4\epsilon)$ is Lagrangian if
$\langle X,JY\rangle=0$ for all tangent vector fields $X,Y$ on $M$.
A normal vector field $\xi=J\operatorname{grad}f$, where $f$ is a
smooth compactly supported function on $M$, is called a Hamiltonian
variation. A submanifold $M$ is Hamiltonian stationary if its volume
is critical under all Hamiltonian variations. This condition is
equivalent to
\begin{equation*}
 \diver(JH)=0,
\end{equation*}
where $H$ is the mean curvature vector field and $\diver$ is the
divergence with respect to the induced metric (cf.\ \cite{Oh1993}).

Hamiltonian stationary Lagrangian surfaces in $\Mtilde^2(4\epsilon)$ with constant Gaussian curvature $K=\epsilon$
have been classified
\cite{ChenDillen2008,CGZ2009,Chen2012,CDVV1998}. It was shown
that every Hamiltonian stationary Lagrangian surface with non-zero
constant $|H|$ is flat and has a parallel second fundamental form
\cite{Sasahara2026}. The same paper classified Hamiltonian stationary Lagrangian
surfaces for which $H$ is nowhere vanishing, $K$ is constant, and
$|H|$ is a non-constant harmonic function. For these surfaces,
$K=\epsilon=-1$.

To the best of our knowledge, there are no examples of Hamiltonian
stationary Lagrangian surfaces with constant Gaussian curvature
$K\ne\epsilon$ and non-constant $|H|$. This motivates us to
investigate whether such surfaces can exist.

Our main result is the following.
\begin{theorem}\label{thm:constant-curvature}
Let $M$ be a connected Hamiltonian stationary Lagrangian surface in
$\Mtilde^2(4\epsilon)$ with constant Gaussian curvature $K$. Then either
$K=\epsilon$, or $K=0\ne\epsilon$ and $|H|$ is constant. In particular,
if $|H|$ is non-constant, then $K=\epsilon$.
\end{theorem}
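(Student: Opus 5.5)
We work locally in isothermal coordinates and reduce the structure equations to complex functions. Choose a local complex coordinate $z=x+iy$ with induced metric $g=e^{2u}|dz|^2$. Since $M$ is Lagrangian, the cubic form $C(X,Y,Z)=\langle h(X,Y),JZ\rangle$ is totally symmetric. Its data reduce to two complex functions:
\begin{equation*}
 f=\langle h(\partial_z,\partial_z),J\partial_z\rangle, \qquad \varphi=\langle H,J\partial_z\rangle .
\end{equation*}
In a complex space form the mean curvature $1$-form $\alpha=\langle H,J\,\cdot\,\rangle$ is closed; this is the Codazzi equation, using that the ambient metric is K\"ahler--Einstein. The condition $\diver(JH)=0$ says $\alpha$ is co-closed. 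Hence $\alpha$ is harmonic, so $\varphi\,dz$ is a holomorphic $1$-form, and $|H|^2=c\,|\varphi|^2e^{-2u}$ for a positive constant $c$.

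The remaining structure equations take the following form. The Codazzi equation for the trace-free part gives $f_{\bar z}=\lambda\, e^{2u}\varphi_z$ for a constant $\lambda$ fixed by the normalizations, up to lower-order terms which I expect to vanish. The Gauss equation becomes
\begin{equation*}
 K-\epsilon=a|H|^2-b\,|f|^2e^{-6u}
\end{equation*}
with explicit positive constants $a,b$. Finally, $K=-e^{-2u}\Delta_0u$.

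Assume $K\neq\epsilon$ and argue by contradiction unless $K=0$ and $|H|$ is constant.

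First, where $H\ne0$, holomorphicity of $\varphi$ gives $\Delta_g\log|H|^2=2K$. Second, constant $K$ together with the Gauss equation expresses $|f|^2e^{-6u}$ as an affine function of $|H|^2$:
\begin{equation*}
 |f|^2e^{-6u}=\frac{a|H|^2+\epsilon-K}{b}.
\end{equation*}
The right-hand side is strictly positive wherever $H=0$, because $K\neq\epsilon$ forces $K<\epsilon$ there. So $f$ is nonvanishing near such points; in general I restrict to the open dense set where $f\varphi\neq0$, or where $f\neq0$ when $\varphi\equiv0$.

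On that set I take $\partial_z$ of $\log|f|^2-6u$ and then $\partial_{\bar z}$, and substitute the Codazzi relation for $f_{\bar z}$. This computes $\Delta_g\log(|f|^2e^{-6u})$ in terms of $u$, $\varphi$, $\varphi_z$ and $f$. Comparing it with $\Delta_g$ of the log of the affine expression above, and using $\Delta_g\log|H|^2=2K$ together with $|\nabla|H|^2|^2$ computed from $\varphi$, should yield an algebraic identity. That identity will be polynomial in $|H|^2$, $|\varphi_z|^2e^{-4u}$ and the $\epsilon,K$-dependent quantity $|f|^2e^{-6u}$.

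The case $\varphi\equiv0$ is simpler: $f$ is then holomorphic and $\Delta_g\log(|f|^2e^{-6u})=6K$. Since $|f|^2e^{-6u}$ is constant in this case, $K=0$ follows immediately, with $|H|=0$ constant.

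The main obstacle is the general case. There I must extract from the identity that either $|H|$ is locally constant with $K=0$, or a contradiction arises. My plan is to treat the identity as a polynomial relation in the non-constant function $|H|^2$. If $|H|$ is non-constant on an open set, then $|H|^2$ takes a continuum of values, and a further differentiation should eliminate the gradient term. All coefficients must then vanish, which I expect forces $K=\epsilon$, contradicting the assumption. If instead $|H|$ is locally constant, the formula $\Delta_g\log|H|^2=2K$ gives $K=0$ directly, or $H\equiv0$ and we are back in the previous case. The result $K=0\ne\epsilon$ is consistent with \cite{Sasahara2026}.

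Finally, the equations form an elliptic system, so the immersion is real analytic. This lets me pass from the open dense set to all of the connected surface $M$, giving the dichotomy in Theorem~\ref{thm:constant-curvature}.
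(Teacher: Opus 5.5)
Your proposal has a genuine gap. The only hard case, $\varphi\not\equiv0$ with $|H|$ non-constant, is left at the level of \emph{should yield} and \emph{I expect}, and the mechanism you sketch for it does not work.

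First, the lower-order term in Codazzi does not vanish: with $g=e^{2u}|dz|^2$ one has $f_{\bar z}=\tfrac12e^{4u}(e^{-2u}\varphi)_z=\tfrac12e^{2u}(\varphi_z-2u_z\varphi)$.

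More seriously, Codazzi gives you $f_{\bar z}$ but not $f_z$. Write $\partial_z\partial_{\bar z}\log|f|^2=2\operatorname{Re}\,\partial_z(f_{\bar z}/f)$ and eliminate $f_z$ via $f_z/f=\partial_z\log|f|^2-\overline{f_{\bar z}}/\bar f$. You get
\[
\partial_z\partial_{\bar z}\log|f|^2=2\operatorname{Re}\Bigl(\frac{\partial_z(f_{\bar z})-(\partial_z\log|f|^2)\,f_{\bar z}}{f}\Bigr)+\frac{2|f_{\bar z}|^2}{|f|^2}.
\]
The first term depends on the argument of $f$, which your computation has no way to express through $u$, $\varphi$ and $|H|$. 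So comparing with the Laplacian of the log of your affine expression gives one real equation containing an unknown phase. It is not an algebraic identity in $|H|^2$, $|\varphi_z|^2e^{-4u}$ and $|f|^2e^{-6u}$.

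Even if you obtained such an identity, the \emph{continuum of values} argument fails. The quantities $|\varphi_z|^2e^{-4u}$, $|\nabla|H|^2|^2$ and the phase are not functions of $|H|^2$, so the coefficients are not constants and nothing forces them to vanish.

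Finally, when $K=0\ne\epsilon$ the required conclusion is that $|H|$ is constant. In the paper this needs its own non-trivial argument (Lemma~\ref{lem:flat-holomorphic}), and your plan offers no substitute.

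The missing idea is to use constancy of $K$ to make the conformal factor explicit, and then \emph{integrate} Codazzi rather than differentiate Gauss. Work in coordinates with $g=w^{-2}|dz|^2$, where $w=1+\kappa|z|^2$ and $\kappa=K/4$ (so $w\equiv1$ when $K=0$). Integrating in $\bar z$ gives $w^2f=A_0+A_1\bar z+A_2\bar z^2$. Here $A_0=G$ is an arbitrary holomorphic function and $A_1,A_2$ are explicit holomorphic combinations of $G,\varphi,\varphi'$. The phase problem is thereby reduced to the single unknown holomorphic function $G$.

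Gauss then becomes $w^4|f|^2-\tfrac14|\varphi|^2=\delta/(8w^2)$ with $\delta=\epsilon-K$. The left side is a finite sum of products of holomorphic and antiholomorphic functions. For $K\ne0$, polarize $\bar z\mapsto\zeta$ and differentiate in $\zeta$ at $0$. Unless $\delta=0$, this puts $1,z,\dots,z^9$ into $\operatorname{span}\{z^jA_i\}$, which has dimension at most $9$. For $K=0$, Lemma~\ref{lem:flat-holomorphic} forces $\varphi$ to be constant.

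Your preliminary facts are correct: holomorphy of $\varphi\,dz$, $\Delta_g\log|H|^2=2K$ where $H\ne0$, $K=0$ in the minimal case, and $K=0$ when $|H|$ is a non-zero constant. But they only dispose of the easy sub-cases.
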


\section{Preliminaries}\label{sec:preliminaries}

We recall the integrability system from
\cite{HMW2019}.

Let $M$ be a Hamiltonian stationary Lagrangian surface in
$\Mtilde^2(4\epsilon)$. Let $g$ and $h$ denote its induced metric and second fundamental form,
respectively. The mean curvature vector is defined by $H=\frac12\operatorname{trace}_g h$.
On an oriented coordinate disc, choose a conformal coordinate $z=x+iy$
such that
\begin{equation*}
 g=2e^u|dz|^2=2e^u(dx^2+dy^2).
\end{equation*}

Define the symmetric cubic tensor
$\mathcal C(X,Y,Z)=\langle h(X,Y),JZ\rangle$. The mean curvature
form $\Phi=\phi\,dz$ is given by
\begin{equation*}
 \Phi+\overline{\Phi}=-\langle JH,\cdot\rangle.
\end{equation*}
Since $M$ is Hamiltonian stationary, $\Phi$ is holomorphic on the coordinate disc (cf.\ \cite{HMW2019}).

Set $\delta=\epsilon-K$ and $\psi=\mathcal C(\partial_z,\partial_z,\partial_z)$. This is the coefficient of the cubic Hopf differential in \cite{HMW2019}.
The relation between $\phi$ and $|H|$ is
\begin{equation}\label{eq:mean-curvature-phi}
 |H|^2=2e^{-u}|\phi|^2.
\end{equation}

Using the above notation, the Gauss and Codazzi equations can be written, respectively, as:
\begin{equation}\label{eq:general-complex-system}
 |\psi|^2=e^{2u}|\phi|^2+\delta e^{3u},
 \qquad \psi_{\bar z}=e^{2u}(e^{-u}\phi)_z.
\end{equation}

\section{A lemma for the flat case}

The following lemma will be used in the proof of
Theorem~\ref{thm:constant-curvature}.

\begin{lemma}\label{lem:flat-holomorphic}
Let $F$ and $G$ be holomorphic functions on a disc $D$ in $\C$.
Suppose that
\begin{equation}\label{eq:flat-holomorphic-relation}
 |\bar z F'(z)+G(z)|^2-|F(z)|^2=\lambda,
\end{equation}
where $\lambda$ is a non-zero real constant.  Then $F$ is constant.
\end{lemma}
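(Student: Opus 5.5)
The plan is to \emph{polarize} the identity \eqref{eq:flat-holomorphic-relation}, turning it into an identity between holomorphic functions of two independent variables, and then use a linear-algebra/degree argument to force $F'\equiv 0$. First I normalize: translating $z\mapsto z+c$ changes $\bar z F'$ by $\bar c F'$, which can be absorbed into $G$, so I may assume $D$ is centred at $0$. For a holomorphic $H$ on $D$ put $H^*(w)=\overline{H(\bar w)}$, holomorphic on $D^*=\{\bar z: z\in D\}$, and write $A=F'$. The left side of \eqref{eq:flat-holomorphic-relation} is a real-analytic function of $(z,\bar z)$. Replacing $\bar z$ by an independent variable $w$, uniqueness of analytic continuation on the polydisc $D\times D^*$ gives
\begin{equation*}
(wA(z)+G(z))\,(zA^*(w)+G^*(w))-F(z)F^*(w)=\lambda \qquad\text{on } D\times D^*.
\end{equation*}

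Next I expand this as a finite sum $\sum_i u_i(z)v_i(w)=0$:
\begin{equation*}
(zA)(wA^*)+A\,(wG^*)+(zG)\,A^*+G\,G^*-F\,F^*-\lambda\cdot 1=0 .
\end{equation*}
Fix points $w_1,\dots,w_6$ for which the vectors $(v_i(w_j))_i$ span the span of the $v$-vectors. Solving the resulting linear system shows that the six functions $zA,\ A,\ zG,\ G,\ F,\ 1$ satisfy linear relations with constant coefficients, and these relations are nontrivial unless the $v$-functions are independent. The symmetric argument in $z$ gives the same for $wA^*$, $wG^*$, etc. From such relations (e.g.\ $zA = \alpha A+\beta G+\gamma F+\dots$ combined with the analogous relation for $zG$) I expect to deduce that $A$, $G$, $F$ all lie in a finite-dimensional space closed under multiplication by $z$ modulo lower terms. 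This forces them to be polynomials, or else yields a first-order linear ODE system whose solutions I can write down explicitly.

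Assuming polynomiality and $A\not\equiv 0$, let $A$ have degree $n$ with leading coefficient $a\neq 0$. Then $F$ has degree $n+1$ with leading coefficient $a/(n+1)$. In the bidegree-$(n+1,n+1)$ part of the polarized identity, the coefficient of $z^{n+1}w^{n+1}$ is $|a|^2-|a|^2/(n+1)^2$ plus whatever $GG^*$ contributes there. Comparing this with the bidegrees contributed by the mixed terms $A(wG^*)$ and $(zG)A^*$, I will show it cannot vanish unless $n=0$, and that $\deg G$ is then pinned down. In the remaining case $A\equiv a\neq 0$, $F=az+b$, I substitute back into the polarized identity and compare low-order coefficients in $(z,w)$: the coefficient of $zw$ forces a relation on $G$, and the constant term forces $\lambda=|G(0)|^2-|b|^2$. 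I expect the constraints to force $G$ to be constant and then $|a|^2=0$, a contradiction. Hence $A\equiv 0$, i.e.\ $F$ is constant. The hypothesis $\lambda\ne0$ should enter exactly here: the $\lambda$-term in the relations prevents the degenerate solution $G(z)=$ const, $F=az+b$ with $|a\bar z+G|=|az+b|$.

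The main obstacle is the middle step: extracting from the rank/linear-dependence relations that $A$, $G$, $F$ are polynomials, or at least of a controlled explicit form. This requires a case analysis on the dimension of the span of the $v$-functions. What I would try first is to differentiate the polarized identity, e.g.\ apply $\partial_z\partial_w$. The terms $AA^*$ and $F'F^{*\prime}$ then cancel precisely because $F'=A$, which leaves an identity involving only $A$, $A'$, $G$, $G'$ and their starred versions. Repeated differentiation in $w$ at $w=0$ should then produce linear ODEs with polynomial coefficients for $A$ and $G$, from which polynomiality follows directly.
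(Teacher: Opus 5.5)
The gap is the step you flag yourself, and the mechanism you suggest for closing it does not work. Neither the rank relations nor linear ODEs with polynomial coefficients force $A$, $G$, $F$ to be polynomials.

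The rank argument gives at best $\dim\operatorname{span}\{zA,A,zG,G,F,1\}\le 3$, and this space is not closed under multiplication by $z$. It is compatible, for instance, with $A=1/(a+bz)$, $G=yA$, $F=b^{-1}\log(a+bz)$. It is also compatible with $G=pA$, $A=(\alpha+\beta z)^{1/\beta-1}$, $F=(\alpha+\beta z)^{1/\beta}$.

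More decisively, consider your own idea of applying $\partial_z\partial_w$. This is the paper's first step, and it gives $2\operatorname{Re}(F''\bar v)+|v_z|^2=0$ with $v=\bar zF'+G$. When $F''\not\equiv0$, it leads to $F'/F''=\alpha_0+\alpha_1z$ with $\operatorname{Re}\alpha_1<0$. Hence $F''=C(\alpha_0+\alpha_1z)^{1/\alpha_1-1}$ with $\operatorname{Re}(1/\alpha_1-1)<-1$. So as soon as $A$ is non-constant, every candidate surviving this first differentiation is non-polynomial. Your bidegree comparison is fine for polynomial data, but it never reaches the actual obstruction. In effect, polynomial data reduce to the paper's easy Case~I ($F''\equiv0$).

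What is missing is an argument that rules out these non-polynomial candidates by returning to the original identity. The paper does this as follows:
\begin{itemize}
\item The ratios $\alpha=F'/F''$, $\beta=G'/F''$, $\gamma=G/F''$ turn out to be polynomials of degree at most $1,1,2$.
\item The compatibility relation $(1-\alpha_1)\gamma=\alpha(\beta-\gamma')$, together with $\operatorname{Re}\alpha_1=-(1+|\beta_1|^2)/2$, forces $\beta_1=0$ and $G=\beta_0F'$.
\item Then $|\bar zF'+G|=|P|$ with $P=(z+\bar\beta_0)F'$ holomorphic, so $|P|^2-|F|^2=\lambda$.
\item This gives $|P'|=|F'|$, hence $P=\mu F+b$ with $|\mu|=1$, and $\lambda\ne0$ yields a contradiction.
\end{itemize}
In your rank framework this is the sub-case $G\in\C A$, and it needs precisely this modulus argument.

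A smaller point concerns your last step. When $A\equiv a$, the $|a|^2zw$ terms from $(wA)(zA^*)$ and from $FF^*$ cancel, so nothing forces $|a|^2=0$. Instead, $\partial_z\partial_w$ gives $G'\equiv0$, say $G\equiv\tau$. The coefficients of $z$ and $w$ then give $\bar a\tau=a\bar b$, hence $|\tau|=|b|$ and $\lambda=0$. That is the contradiction.
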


\begin{proof}
Assume, to the contrary, that $F$ is non-constant, and set
$v=\bar zF'+G$. Its derivatives are
\begin{equation}\label{eq:flat-v-relation}
 v_{\bar z}=F',\qquad v_z=\bar zF''+G',\qquad v_{z\bar z}=F''.
\end{equation}
By \eqref{eq:flat-holomorphic-relation}, $|v|^2=|F|^2+\lambda$.
Applying $\partial_z\partial_{\bar z}$ to this relation, we obtain
\begin{align*}
 |F'|^2
 &=\partial_z\partial_{\bar z}|F|^2
  =\partial_z\partial_{\bar z}|v|^2\\
 &=2\operatorname{Re}\bigl(v_{z\bar z}\bar v\bigr)
   +|v_z|^2+|v_{\bar z}|^2\\
 &=2\operatorname{Re}(F''\bar v)+|v_z|^2+|F'|^2.
\end{align*}
Hence
\begin{equation}\label{eq:flat-differential-identity}
 2\operatorname{Re}(F''\bar v)+|v_z|^2=0.
\end{equation}

\noindent\textbf{Case (I): $F''\equiv0$.}
Then $F(z)=az+c$ for constants $a\ne0$ and $c$. By
\eqref{eq:flat-differential-identity},
$|v_z|^2=|G'|^2=0$, and hence $G\equiv\tau$ for a constant $\tau$.
Equation~\eqref{eq:flat-holomorphic-relation} becomes
\begin{equation}\label{eq:flat-linear-case}
 |\tau|^2-|c|^2+2\operatorname{Re}\bigl((\bar a\tau-a\bar c)z\bigr)
 =\lambda.
\end{equation}
The real part of the holomorphic function
$(\bar a\tau-a\bar c)z$ is therefore constant, and the
Cauchy--Riemann equations give $\bar a\tau-a\bar c=0$.
Since $a\ne0$, it follows that $|\tau|=|c|$, and hence $\lambda=0$,
a contradiction. Thus Case (I) is impossible.

\smallskip
\noindent\textbf{Case (II): $F''\not\equiv0$.}
Then $F'F''\not\equiv0$ by the identity theorem. Choose a disc
$D_0\subset D$ on which $F'F''\ne0$.
From here on, we work on $D_0$.
\begin{equation}\label{eq:flat-auxiliary-functions}
 \alpha=\frac{F'}{F''},\qquad
 \beta=\frac{G'}{F''},\qquad
 \gamma=\frac{G}{F''}.
\end{equation}
By these definitions,
\begin{equation}\label{eq:flat-v-factorization}
 v=F''(\bar z\alpha+\gamma),\qquad
 v_z=F''(\bar z+\beta).
\end{equation}
Substituting \eqref{eq:flat-v-factorization} into
\eqref{eq:flat-differential-identity} and dividing by $|F''|^2$,
we obtain
\begin{equation}\label{eq:flat-local-identity}
 2\operatorname{Re}(\bar z\alpha+\gamma)+|\bar z+\beta|^2=0.
\end{equation}
Applying $\partial_z\partial_{\bar z}$ to this equation gives
\begin{equation}\label{eq:flat-alpha-beta-identity}
 2\operatorname{Re}\alpha'+1+|\beta'|^2=0.
\end{equation}
Since $\operatorname{Re}\alpha'$ is harmonic, applying
$\partial_z\partial_{\bar z}$ once more shows that $|\beta''|^2=0$.
It follows that $\beta$ has the form
\begin{equation}\label{eq:flat-beta-affine}
 \beta=\beta_0+\beta_1z.
\end{equation}
By \eqref{eq:flat-alpha-beta-identity},
$\operatorname{Re}\alpha'$ is constant, which implies that $\alpha'$ is
constant. Writing $\alpha'=\alpha_1$, we have
\begin{equation}\label{eq:flat-alpha-affine}
 \alpha=\alpha_0+\alpha_1z,\qquad
 \operatorname{Re}\alpha_1=-\frac{1+|\beta_1|^2}{2}<0.
\end{equation}

Applying $\partial_z^2$ to \eqref{eq:flat-local-identity}, we obtain
\begin{equation}\label{eq:flat-gamma-second-derivative}
 \gamma''+2\beta_1=0.
\end{equation}
Integrating \eqref{eq:flat-gamma-second-derivative} twice, we obtain
\begin{equation}\label{eq:flat-gamma-form}
 \gamma=\gamma_0+\gamma_1z-\beta_1z^2
\end{equation}
for constants $\gamma_0$ and $\gamma_1$.

Differentiating $F'=\alpha F''$ and $G=\gamma F''$, and using
$\alpha'=\alpha_1$ and $G'=\beta F''$, we obtain
\begin{equation*}
 \alpha F'''=(1-\alpha_1)F'',\qquad
 \beta F''=\gamma'F''+\gamma F'''.
\end{equation*}
Eliminating $F'''$ gives
\begin{equation}\label{eq:flat-polynomial-identity}
 (1-\alpha_1)\gamma=\alpha(\beta-\gamma').
\end{equation}
Substituting \eqref{eq:flat-beta-affine},
\eqref{eq:flat-alpha-affine}, and \eqref{eq:flat-gamma-form} into
\eqref{eq:flat-polynomial-identity}, we get
\begin{equation}\label{eq:flat-polynomial-coefficients}
 (1-\alpha_1)(\gamma_0+\gamma_1z-\beta_1z^2)
 =(\alpha_0+\alpha_1z)(\beta_0-\gamma_1+3\beta_1z).
\end{equation}
Comparing the coefficients of $z^2$ in
\eqref{eq:flat-polynomial-coefficients}, we find
\begin{equation}\label{eq:flat-beta1-condition}
 -(1-\alpha_1)\beta_1=3\alpha_1\beta_1,
 \qquad\text{hence}\qquad \beta_1(1+2\alpha_1)=0.
\end{equation}
If $\beta_1\ne0$, \eqref{eq:flat-beta1-condition} forces
$\alpha_1=-1/2$, which, by \eqref{eq:flat-alpha-affine}, implies
$|\beta_1|^2=0$, a contradiction. Hence $\beta_1=0$.

Comparing the coefficients of $z$ and $1$ in
\eqref{eq:flat-polynomial-coefficients} yields
\begin{equation}\label{eq:flat-constant-coefficients}
 (1-\alpha_1)\gamma_1=\alpha_1(\beta_0-\gamma_1),\qquad
 (1-\alpha_1)\gamma_0=\alpha_0(\beta_0-\gamma_1).
\end{equation}
Solving \eqref{eq:flat-constant-coefficients}, we obtain
$\gamma_1=\alpha_1\beta_0$ and $\gamma_0=\alpha_0\beta_0$, the latter
since $1-\alpha_1\ne0$ by $\operatorname{Re}\alpha_1<0$. Hence
$\gamma=\beta_0\alpha$, that is, $G=\beta_0F'$ on $D_0$.

Set $P(z):=(z+\bar\beta_0)F'(z)$, a holomorphic function on
$D_0$. Since $v=(\bar z+\beta_0)F'(z)$, we have $|v|=|P|$, and
\eqref{eq:flat-holomorphic-relation} becomes
\begin{equation}\label{eq:flat-P-relation}
 |P|^2-|F|^2=\lambda.
\end{equation}
Applying $\partial_z\partial_{\bar z}$ to this equation yields
\begin{equation}\label{eq:flat-P-derivative-identity}
 |P'|^2=|F'|^2.
\end{equation}
Since $F'\ne0$ on $D_0$, $P'/F'$ is holomorphic on $D_0$
and has modulus one by \eqref{eq:flat-P-derivative-identity}. Hence it
is a constant $\mu$ with $|\mu|=1$, by the maximum modulus principle.
Consequently, $P=\mu F+b$ for a constant $b$. Substituting this into
\eqref{eq:flat-P-relation} gives
\begin{equation}\label{eq:flat-real-part-identity}
 2\operatorname{Re}(\mu\bar b F)+|b|^2=\lambda.
\end{equation}
If $b\ne0$, \eqref{eq:flat-real-part-identity} shows that the holomorphic
function $\mu\bar bF$ has constant real part. The Cauchy--Riemann equations
then imply that $\mu\bar bF$ is constant. Since $\mu\bar b\ne0$, this
forces $F'=0$ on $D_0$, contradicting the choice of $D_0$. If $b=0$,
\eqref{eq:flat-real-part-identity} forces $\lambda=0$, also a contradiction.

The contradictions in Cases (I) and (II) rule out the assumption that
$F$ is non-constant. Therefore, $F$ is constant.
\end{proof}

\section{Proof of the main theorem}

\begin{proof}[Proof of Theorem~\ref{thm:constant-curvature}]
We use the notation of
Section~\ref{sec:preliminaries}.
We divide the proof into two cases.

\smallskip
\noindent\textbf{Case (I): $K=0$.}
If $\epsilon=0$, then $K=\epsilon$.
Suppose that $\epsilon\ne0$ and choose local Euclidean coordinates with
$g=|dz|^2$, which gives $e^u=1/2$. By the Codazzi equation in
\eqref{eq:general-complex-system},
\begin{equation*}
 \psi_{\bar z}=\frac12\phi'.
\end{equation*}
Integrating with respect to $\bar z$, we obtain
\begin{equation*}
 \psi=\frac12\bar z\phi'+G,
\end{equation*}
where $G$ is holomorphic. The Gauss equation in \eqref{eq:general-complex-system} takes the form
\begin{equation*}
 \left|\frac12\bar z\phi'(z)+G(z)\right|^2
 -\frac14|\phi(z)|^2=\frac{\epsilon}{8}.
\end{equation*}
This relation has the form in Lemma~\ref{lem:flat-holomorphic} with
$F=\phi/2$ and $\lambda=\epsilon/8$. Since $\epsilon\ne0$, the lemma
implies that $\phi$ is locally constant.
Equation~\eqref{eq:mean-curvature-phi} then shows that $|H|$ is locally
constant, and connectedness then makes $|H|$ constant on $M$.

\smallskip
\noindent\textbf{Case (II): $K\ne0$.}
Set $\kappa=K/4$. Since $K=4\kappa$ is constant, $(M,g)$ is locally isometric to the surface with metric $w^{-2}|dz|^2$, where $w=1+\kappa|z|^2$. Hence around any point there is an oriented conformal coordinate $z$, centered at that point, such that $g=w^{-2}|dz|^2$. Choose $r>0$ satisfying $|\kappa|r^2<1$ and let $\Delta=\{z:|z|<r\}$. Then $w=1+\kappa|z|^2>0$ on $\Delta$ and $e^u=1/(2w^2)$.
The Codazzi equation in \eqref{eq:general-complex-system} becomes
\begin{equation*}
 \psi_{\bar z}=\frac{\phi'}{2w^2}
                 +\frac{\kappa\bar z\phi}{w^3}.
\end{equation*}
Integration with respect to $\bar z$ yields
\begin{equation}\label{eq:general-integrated-cubic}
 \psi=G(z)+\frac{\bar z\phi'(z)}{2w}
       +\frac{\kappa\bar z^2\phi(z)}{2w^2},
\end{equation}
where $G$ is holomorphic.

Define the holomorphic functions by
\begin{equation*}
 A_0=G,\qquad A_1=2\kappa zG+\frac12\phi',\qquad
 A_2=\kappa^2z^2G+\frac\kappa2z\phi'+\frac\kappa2\phi.
\end{equation*}
Then we have
\begin{equation}\label{eq:cubic-polynomial-expansion}
 w^2\psi=A_0(z)+A_1(z)\bar z+A_2(z)\bar z^2.
\end{equation}
The Gauss equation in \eqref{eq:general-complex-system} yields
\begin{equation}\label{eq:general-finite-rank}
 w^4|\psi|^2-\frac14|\phi|^2=\frac{\delta}{8(1+\kappa|z|^2)^2}.
\end{equation}

Taking the squared modulus of \eqref{eq:cubic-polynomial-expansion}, we obtain
\begin{equation*}
 w^4|\psi|^2
 =\sum_{i,j=0}^2
   [z^jA_i(z)][\bar z^i\overline{A_j(z)}].
\end{equation*}
Since $\Delta$ is invariant under complex conjugation, for a holomorphic function $L$ on $\Delta$ define
$L^*(\zeta)=\overline{L(\bar\zeta)}$ for $\zeta\in\Delta$.
This is holomorphic. Replacing $\bar z$ by an independent variable
$\zeta$ and each conjugate holomorphic function by its reflected function
in \eqref{eq:general-finite-rank}, both sides are holomorphic in
$(z,\zeta)$ and coincide for $\zeta=\bar z$. Hence they coincide identically:
\begin{equation}\label{eq:general-polarized-rank}
 \sum_{i,j=0}^2
 [z^jA_i(z)][\zeta^iA_j^*(\zeta)]
 -\frac14\phi(z)\phi^*(\zeta)
 =\frac{\delta}{8(1+\kappa z\zeta)^2}.
\end{equation}

The definitions of $A_0,A_1,A_2$ give
\begin{equation*}
 \phi=\frac{2}{\kappa}\bigl(A_2-\kappa zA_1+\kappa^2z^2A_0\bigr),
\end{equation*}
and hence $\phi$ belongs to the span of the functions $z^jA_i$. Let
\begin{equation*}
 \mathcal V=\operatorname{span}_{\C}
 \{z^jA_i:0\le i,j\le2\}.
\end{equation*}
Then $\dim_{\C}\mathcal V\le9$.
Since $\phi\in\mathcal V$, differentiating \eqref{eq:general-polarized-rank} $n$ times with respect to $\zeta$ and evaluating at $\zeta=0$ shows that

\begin{equation*}
 \frac{\delta}{8}(-\kappa)^n(n+1)!z^n\in\mathcal V,
 \qquad n=0,1,\ldots,9.
\end{equation*}
If $\delta\ne0$, all ten monomials $1,z,\ldots,z^9$ belong
to $\mathcal V$, since $\kappa\ne0$. Their linear independence contradicts
$\dim_{\C}\mathcal V\le9$.
Hence $\delta=0$, that is, $K=\epsilon$.
\end{proof}

\medskip
\noindent\textbf{AI Disclosure.}\quad
The author used ChatGPT (OpenAI) to assist in exploring proof strategies.
The author verified all mathematical arguments and takes full
responsibility for the content of this manuscript.


\begin{thebibliography}{9}

\bibitem{ChenDillen2008}
B.-Y. Chen and F. Dillen,
Warped product decompositions of real space forms and
Hamiltonian-stationary Lagrangian submanifolds,
\emph{Nonlinear Anal.} \textbf{69} (2008), no.~10, 3462--3494.

\bibitem{CDVV1998}
B.-Y. Chen, F. Dillen, L. Verstraelen and L. Vrancken,
Lagrangian isometric immersions of a real-space-form $M^n(c)$
into a complex-space-form $\Mtilde^n(4c)$,
\emph{Math. Proc. Cambridge Philos. Soc.}
\textbf{124} (1998), 107--125.

\bibitem{CGZ2009}
B.-Y. Chen, O. J. Garay and Z. Zhou,
Hamiltonian-stationary Lagrangian surfaces of constant curvature
$\epsilon$ in complex space form $\Mtilde^2(4\epsilon)$,
\emph{Nonlinear Anal.} \textbf{71} (2009), no.~7--8, 2640--2659.

\bibitem{Chen2012}
B.-Y. Chen,
Solutions to over-determined systems of partial differential equations
related to Hamiltonian stationary Lagrangian surfaces,
\emph{Electron. J. Differential Equations} (2012), no.~83, 1--7.

\bibitem{HMW2019}
H. X. He, H. Ma and E. X. Wang,
Lagrangian Bonnet problems in complex space forms,
\emph{Acta Math. Sin. (Engl. Ser.)} \textbf{35} (2019), no.~8,
1357--1366.

\bibitem{Oh1993}
Y. G. Oh,
Volume minimization of Lagrangian submanifolds under Hamiltonian
deformations,
\emph{Math. Z.} \textbf{212} (1993), 175--192.

\bibitem{Sasahara2026}
T. Sasahara,
Hamiltonian stationary Lagrangian surfaces with harmonic mean
curvature in complex space forms,
\emph{Mediterr. J. Math.} \textbf{23} (2026), no.~5, Paper No.~151.

\end{thebibliography}
\end{document}